\newtheorem{theorem}{Theorem}[section]
\newtheorem*{Acknowledgement}{\textnormal{\textbf{Acknowledgement}}}
\newtheorem{proposition}[theorem]{Proposition}
\newtheorem{corollary}[theorem]{Corollary}
\theoremstyle{definition}
\newtheorem{definition}[theorem]{Definition}
\newtheorem{Open Prob}[theorem]{Open Problem}
\theoremstyle{remark}
\newtheorem{remark}[theorem]{Remark}
\numberwithin{equation}{section}
\def\DJ{\leavevmode\setbox0=\hbox{D}\kern0pt\rlap{\kern.04em\raise.188\ht0\hbox{-}}D}
\begin{document}

\title[Some remarks on the metrizability of some metric-like structures]{Some remarks on the metrizability of some well known generalized metric-like structures}

\author[S.\ Som,  A. Petru\c{s}el, L.K.\ Dey]
{Sumit Som$^{1}$, Adrian Petru\c{s}el$^{2}$, Lakshmi Kanta Dey$^{3}$}

\address{{$^{1}$} Sumit Som,
                    Department of Mathematics,
                    National Institute of Technology
                    Durgapur, India.}
                    \email{somkakdwip@gmail.com}
\address{{$^{2}$} Adrian Petru\c{s}el,
                    Department of Mathematics,
                    Babe\c{s}-Bolyai University Cluj-Napoca, Romania and
                    Academy of Romanian Scientists Bucharest, Romania}
                    \email{petrusel@math.ubbcluj.ro} 
\address{{$^{3}$} Lakshmi Kanta Dey,
                    Department of Mathematics,
                    National Institute of Technology
                    Durgapur, India.}
                    \email{lakshmikdey@yahoo.co.in}

\keywords{ $b$-metric space, $\mathcal{F}$-metric space, $\theta$-metric space, metrizability.\\
\indent 2010 {\it Mathematics Subject Classification}. $54$E$35$, $54$H$99$.}

\begin{abstract}
In \cite[\, An, V.T., Tuyen, Q.L. and Dung, V.N., Stone-type theorem on $b$-metric spaces and applications, Topology Appl. 185-186 (2015), 50-64.]{an}, An et al. had provided a sufficient condition for $b$-metric spaces to be metrizable. However, their proof of metrizability relied on an assumption that the distance function is continuous in one variable. In this short note, we improve upon this result in a more simplified way without considering any assumption on the distance function. Moreover, we provide two shorter proofs of the metrizability of $\mathcal{F}$-metric spaces recently introduced by Jleli and Samet in \cite[\, Jleli, M. and Samet, B., On a new generalization of metric spaces, J. Fixed Point Theory Appl. (2018) 20:128]{JS1}. Lastly, in this short note, we give an alternative proof of the metrizability of $\theta$-metric spaces introduced by Khojasteh et al. in \cite[\, Khojasteh, F., Karapinar, E. and Radenovic, S., $\theta$-metric space: A Generalization, Math. Probl. Eng. Volume 2013, Article 504609, 7 pages]{ks}.

\end{abstract}

\maketitle

\setcounter{page}{1}

\section{\bf Metrizability of $b$-metric spaces}
\baselineskip .55 cm
In the year 1993, Czerwik \cite{cz1} had introduced the notion of a $b$-metric as a generalization of a metric and further, in 1998, Czerwik \cite{cz2} had modified this notion where the coefficient $2$ was replaced by coefficient $K\geq 1.$ Surprisingly in the year 1998, Aimar et al. \cite{ai} proved the metrizability of such spaces. In this sequel, intendant readers can see \cite{C} for some more results.  In the year 2010, Khamsi and Hussain \cite{kh} defined the concept of a $b$-metric under the name metric-type spaces where they had considered the coefficient to be $K>0.$ To avoid confusion, the metric-type in the sense of Khamsi and Hussain \cite{kh} will be called $b$-metric in this short note. Before going further, we like to recall the definition of a $b$-metric space from \cite{kh} as follows:

\begin{definition} \cite[\, Definition 6.]{kh}
Let $X$ be a non-empty set and $K>0.$ A distance function $D: X \times X\rightarrow [0,\infty)$ is said to be a $b$-metric on $X$ if it satisfies the following conditions:
\begin{enumerate}
\item[(i)] $D(x,y)=0\Longleftrightarrow x=y~~\mbox{for all}~(x,y)\in X \times X$;

\item[(ii)] $D(x,y)=D(y,x)~~\mbox{for all}~(x,y)\in X \times X$;

\item[(iii)]$D(x,z)\leq K[D(x,y)+D(y,z)]~~\mbox{for all}~ x,y,z\in X$.
\end{enumerate}
\end{definition}
Then the triple $(X,D,K)$ is called a $b$-metric space. If we take $K=1,$ then $X$  becomes a metric space. So $b$-metric spaces are more general than the standard metric spaces. Again, in the year $2015$,   An et al.  \cite{an} presented a proof for the metrizability of $b$-metric spaces with coefficient $K>0$. However, they proved the metrizability result on an assumption that the distance function is continuous in one variable. We will state first the main theorem and its corollary due to An et al. 

\begin{theorem} \cite[\, Theorem 3.15.]{an} \label{con1}
Let $(X,D,K)$ be a $b$-metric space. If $D$ is continuous in one variable then every open cover of $X$ has an open refinement which is both locally finite and $\sigma$-discrete.
\end{theorem}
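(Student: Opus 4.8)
The plan is to adapt A.H.\ Stone's classical proof that every metric space is paracompact (in the sharp form producing a $\sigma$-discrete, locally finite open refinement) to the $b$-metric setting, tracking the constant $K$ through every application of the relaxed triangle inequality. Throughout I would use the hypothesis that $D$ is continuous in one variable only to guarantee that each open ball $B(x,r)=\{y\in X: D(x,y)<r\}$ is an open set and that these balls form a neighbourhood base for the topology; this is precisely the place where the continuity assumption enters.

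Fix an open cover $\{U_\alpha\}_{\alpha\in A}$ and well-order the index set $A$. Put $r_n=2^{-n}$ and fix a multiplier $M>K+K^2$. By transfinite recursion on $n\in\mathbb{N}$ I would define, for each $\alpha$, the set $V_{n,\alpha}$ as the union of all balls $B(c,r_n)$ taken over centres $c$ satisfying: (a) $\alpha$ is the least index with $c\in U_\alpha$; (b) $c\notin V_{m,\beta}$ for every $m<n$ and every $\beta$; and (c) $B(c,Mr_n)\subseteq U_\alpha$. Each $V_{n,\alpha}$ is then open as a union of open balls, and condition (c) together with $r_n\le Mr_n$ immediately gives the refinement property $V_{n,\alpha}\subseteq U_\alpha$.

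The covering property follows from the standard recursion argument: given $x\in X$ with least index $\alpha$ such that $x\in U_\alpha$, openness of $U_\alpha$ yields an $n$ with $B(x,Mr_n)\subseteq U_\alpha$, and if $x$ lies in no earlier $V_{m,\beta}$ then $x$ is itself an admissible centre for $(n,\alpha)$, so $x\in B(x,r_n)\subseteq V_{n,\alpha}$. The heart of the matter is the separation estimate for a fixed layer $n$. If $x\in V_{n,\alpha}$ and $y\in V_{n,\beta}$ with $\alpha<\beta$, then the centre $c_\beta$ producing $y$ is not in $U_\alpha$ while $B(c_\alpha,Mr_n)\subseteq U_\alpha$, so $D(c_\alpha,c_\beta)\ge Mr_n$. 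Expanding $D(c_\alpha,c_\beta)$ twice by the $b$-triangle inequality through $x$ and then $y$ gives
\[
Mr_n\le K\,D(c_\alpha,x)+K^2 D(x,y)+K^2 D(y,c_\beta)\le (K+K^2)r_n+K^2 D(x,y),
\]
whence $D(x,y)\ge (M-K-K^2)r_n/K^2>0$. Shrinking any ball by a factor controlled by $K$ (one checks $\rho<r_n/(2K^3)$ suffices) then shows that each family $\{V_{n,\alpha}\}_\alpha$ is discrete, so $\{V_{n,\alpha}\}_{n,\alpha}$ is $\sigma$-discrete.

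It remains to establish local finiteness, and this is the step I expect to be the main obstacle, since a $\sigma$-discrete family need not be locally finite in general. Given $x$, by the covering property it lies in some $V_{n,\alpha}$, hence in some ball $B(c,r_n)$; openness of this ball furnishes $\rho_0>0$ with $B(x,\rho_0)\subseteq V_{n,\alpha}$. For any layer $m>n$, admissibility condition (b) forces every centre used in $V_{m,\beta}$ to avoid $V_{n,\alpha}$, hence to lie at $D$-distance at least $\rho_0$ from $x$, and a $b$-triangle inequality estimate shows that once $Kr_m$ and $K\rho$ are small compared with $\rho_0$, the neighbourhood $B(x,\rho)$ meets no $V_{m,\beta}$. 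Thus $B(x,\rho)$ can only meet the finitely many layers $m\le N$, and after shrinking $\rho$ finitely many further times using the discreteness of each such layer, $B(x,\rho)$ meets at most one member in each of them. Hence $B(x,\rho)$ meets only finitely many $V_{n,\alpha}$, completing the proof. The only genuinely new work beyond Stone's argument is the bookkeeping of the constant $K$, which dictates the choice $M>K+K^2$ and the shrinking factors $1/K^2$ and $1/(2K^3)$ appearing in the separation and local-finiteness estimates.
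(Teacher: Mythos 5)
The paper does not actually prove this statement: Theorem \ref{con1} is quoted verbatim from An, Tuyen and Dung as motivation, and the authors' own contribution (Theorem \ref{FMSS}) deliberately bypasses it by invoking the Niemytski--Wilson metrization criterion instead. So the only meaningful comparison is with the cited source, whose argument is likewise a Stone-type construction, and your proposal follows that route correctly. You deploy the continuity hypothesis exactly where it is genuinely needed --- in a $b$-metric space the balls $B(x,r)$ need not be open for the ball-generated topology, and continuity of $D$ in one variable is what makes them open, hence makes each $V_{n,\alpha}$ open, and makes $B(z,\rho)$ an open neighbourhood of $z$ in the discreteness and local-finiteness steps. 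The separation estimate $D(x,y)\ge (M-K-K^2)r_n/K^2$ obtained by chaining the relaxed triangle inequality twice is correct, the covering argument is the standard one, and your treatment of local finiteness (discarding all layers $m\ge N$ via $\rho_0\le D(x,c')\le K\rho+Kr_m$, then shrinking $\rho$ finitely often to handle the layers $m<N$ by discreteness) is sound; this is indeed the one step that does not follow formally from $\sigma$-discreteness alone, and you supply it. The only quibble is a constant: with merely $M>K+K^2$ the discreteness radius should be $\rho<(M-K-K^2)r_n/(2K^3)$ rather than $r_n/(2K^3)$; fixing $M=K+K^2+1$ makes your stated bound literal. (The degenerate case $0<K<1$ forces $X$ to be a singleton, so nothing is lost by assuming $K\ge 1$.)
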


\begin{corollary} \cite[\, Corollary 3.17.]{an} \label{con2}
Let $(X,D,K)$ be a $b$-metric space. If $D$ is continuous in one variable then $X$ is metrizable.
\end{corollary}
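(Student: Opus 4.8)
The plan is to deduce metrizability from Theorem \ref{con1} by invoking a classical metrization theorem, namely the Bing--Nagata--Smirnov theorem: a topological space is metrizable if and only if it is regular, Hausdorff, and admits a base that is $\sigma$-discrete (equivalently, $\sigma$-locally finite). Since Theorem \ref{con1} already produces $\sigma$-discrete, locally finite open refinements of arbitrary open covers, the corollary should follow once we (i) extract a single $\sigma$-discrete base from these refinements and (ii) check the separation axioms. Throughout, the relevant topology on $X$ is the one in which a set $U$ is open exactly when for each $x \in U$ there is $r > 0$ with $B(x,r) = \{y \in X : D(x,y) < r\} \subseteq U$; the hypothesis that $D$ is continuous in one variable is used precisely to guarantee that each ball $B(x,r)$ is itself open, so that the balls form a neighborhood base at every point.

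First I would construct the candidate base. For each $n \in \mathbb{N}$ consider the open cover $\mathcal{U}_n = \{B(x, 1/n) : x \in X\}$. By Theorem \ref{con1}, $\mathcal{U}_n$ has an open refinement $\mathcal{V}_n$ that is locally finite and $\sigma$-discrete, say $\mathcal{V}_n = \bigcup_{k \in \mathbb{N}} \mathcal{V}_{n,k}$ with each $\mathcal{V}_{n,k}$ discrete. I then set $\mathcal{V} = \bigcup_{n \in \mathbb{N}} \mathcal{V}_n = \bigcup_{n,k} \mathcal{V}_{n,k}$. Being a countable union of discrete families of open sets, $\mathcal{V}$ is a $\sigma$-discrete family of open sets, and this is the base I propose to use.

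Next I would verify that $\mathcal{V}$ is in fact a base. Fix $x \in X$ and an open $U \ni x$, and choose $\varepsilon > 0$ with $B(x,\varepsilon) \subseteq U$. Pick $n$ so large that $2K/n < \varepsilon$. Since $\mathcal{V}_n$ covers $X$, some $V \in \mathcal{V}_n$ contains $x$, and since $\mathcal{V}_n$ refines $\mathcal{U}_n$ there is $y \in X$ with $V \subseteq B(y, 1/n)$. For any $z \in V$ we have $D(x,y) < 1/n$ and $D(y,z) < 1/n$, so the $b$-triangle inequality gives $D(x,z) \le K[D(x,y) + D(y,z)] < 2K/n < \varepsilon$. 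Hence $x \in V \subseteq B(x,\varepsilon) \subseteq U$, as required. This is the one place where the $b$-metric, rather than a genuine metric, really intervenes: the coefficient $K$ inflates the mesh by a factor of $2K$, which is harmless since $n$ may be taken arbitrarily large.

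Finally I would record the separation axioms. Hausdorffness is immediate: for distinct $x,y$ with $d = D(x,y) > 0$, the balls $B(x, d/(2K))$ and $B(y, d/(2K))$ are disjoint, since a common point $z$ would force $d \le K[D(x,z)+D(z,y)] < d$. For regularity I would avoid a direct argument and reuse Theorem \ref{con1}: its locally-finite-refinement conclusion says exactly that $X$ is paracompact, and a paracompact Hausdorff space is normal, hence regular. With $X$ regular Hausdorff and equipped with the $\sigma$-discrete base $\mathcal{V}$, the Bing--Nagata--Smirnov theorem yields metrizability. I expect the genuine content to lie in the mesh estimate of the previous paragraph, where the failure of the ordinary triangle inequality must be absorbed into the constant $K$; the construction of the base and the separation axioms are then essentially routine consequences of Theorem \ref{con1} and the continuity hypothesis.
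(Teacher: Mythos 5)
Your argument is correct, and it is essentially the standard derivation of Corollary 3.17 from Theorem 3.15 in \cite{an}: the covers by $(1/n)$-balls admit locally finite, $\sigma$-discrete open refinements whose union is a $\sigma$-discrete base (your mesh estimate $D(x,z)\le K[D(x,y)+D(y,z)]<2K/n<\varepsilon$ is exactly where the constant $K$ gets absorbed), paracompactness plus Hausdorffness gives regularity, and the Bing metrization theorem finishes. Be aware, though, that the paper under review does not prove this corollary at all --- it quotes it from \cite{an} precisely in order to criticize the continuity hypothesis --- and its own route to metrizability of $b$-metric spaces (Theorem \ref{FMSS}) is entirely different and far shorter: it verifies condition (iii-C) of the Niemytski--Wilson theorem (Theorem \ref{ew}) directly from the $b$-triangle inequality (if $D(a,b)\ge t$ then $D(a,c)+D(c,b)\ge t/K$ for every $c$), with no covers, no paracompactness, and no assumption on $D$ beyond the $b$-metric axioms. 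Your approach uses the continuity hypothesis in an essential way --- it is what guarantees that each ball $B(x,1/n)$ is open, so that $\mathcal{U}_n$ is a legitimate open cover to which Theorem \ref{con1} applies --- whereas the whole point of the paper is that this hypothesis is superfluous for metrizability. What your route buys in exchange is genuinely more structure: an explicit $\sigma$-discrete base and paracompactness of the induced topology, neither of which the Niemytski--Wilson argument exhibits.
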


One of the main motivation of this short note is to give a simple proof of the metrizability of $b$-metric spaces with coefficient $K>0$ without considering any assumptions. We use metrization theorem due to Niemytski and Wilson in our proof. Before proceeding to our metrizability result, we like to recall the metrization theorem due to Niemytski and Wilson as follows: 

\begin{theorem} \cite[\, Page 137.]{FR} \label{ew}
Let $X$ be a topological space and $F:X \times X\rightarrow [0,\infty)$ be a distance function on $X$. If the distance function $F$ satisfies
\begin{enumerate}
\item[(i)] $F(x,y)=0\Longleftrightarrow x=y~~\mbox{for all}~(x,y)\in X \times X$;

\item[(ii)] $F(x,y)=F(y,x)~~\mbox{for all}~(x,y)\in X \times X$
\end{enumerate}

and one of the following conditions:

\begin{enumerate}
\item[(iii-A)] Given a point $a \in X$ and a number $\varepsilon>0$, there exists $\phi(a,\varepsilon)>0$ such that if $F(a,b)<\phi(a,\varepsilon)$ and $F(b,c)<\phi(a,\varepsilon)$ then $F(a,c)<\varepsilon$;

\item[(iii-B)] if $a\in X$ and $\{a_n\}_{n\in \mathbb{N}}, \{b_n\}_{n\in \mathbb{N}}$ are two sequences in $X$ such that $F(a_n,a)\rightarrow 0$ and  $F(a_n,b_n)\rightarrow 0$ as $n \rightarrow \infty$ then $F(b_n,a)\rightarrow 0$ as $n \rightarrow \infty$;

\item[(iii-C)] for each point $a\in X$ and positive number $k,$ there is a positive number $r$ such that if $b\in X$ for which $F(a,b)\geq k,$ and $c$ is any point then $ F(a,c)+F(b,c)\geq r$,
\end{enumerate}
then the topological space $X$ is metrizable. 
\end{theorem}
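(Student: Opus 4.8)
The plan is to manufacture from the pair $(X,F)$ a genuine metric that induces the given topology, where I read that topology as the semimetric topology of $F$: a set $G$ is open iff every $x\in G$ carries some $\varepsilon>0$ with $B(x,\varepsilon):=\{y:F(x,y)<\varepsilon\}\subseteq G$. Conditions (i)--(ii) make singletons closed, so $X$ is at least $T_1$. The first step is to observe that the three alternative hypotheses are interderivable, so I may run the whole argument under whichever is most convenient, and I would choose (iii-A). Indeed, if (iii-A) failed at some $a$ for some $\varepsilon$, I could pick, for $\delta=1/n$, points $p_n,q_n$ with $F(a,p_n)<\tfrac1n$, $F(p_n,q_n)<\tfrac1n$, yet $F(a,q_n)\ge\varepsilon$; feeding $a_n:=p_n$ and $b_n:=q_n$ into (iii-B) contradicts $F(a,q_n)\ge\varepsilon$, and applying (iii-C) with the distant point $q_n$ as its apex and the intermediate point $p_n$ as the free point gives $F(a,p_n)+F(p_n,q_n)\ge r$ against the bound $<2\delta$. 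Thus (iii-B) and (iii-C) each force (iii-A).

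Working under (iii-A), the second step is topological. I would check that $\{B(x,1/n)\}_{n\ge1}$ is a countable neighbourhood base at each point, that $X$ is Hausdorff (immediate from (i)--(ii)), and that $X$ is regular. Regularity is where (iii-A) first earns its keep: given $x$ and a closed set $C$ with $x\notin C$, choose $\varepsilon$ with $B(x,\varepsilon)\cap C=\emptyset$, and then the threshold $\phi(x,\varepsilon)$ lets me peel the smaller ball $B(x,\phi)$ away from $C$ and separate the two by open sets. The same estimate shows that the balls, although possibly not open themselves, contain open neighbourhoods cofinally, so they genuinely describe the topology and furnish a development; in particular $X$ is a regular, first-countable, developable (Moore) space.

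The decisive step is to upgrade this raw development to the rigid structure a metrization theorem consumes. The tempting shortcut is the chaining pseudometric
\[
\rho(x,y)=\inf\Big\{\sum_{i=1}^{n}F(z_{i-1},z_i):n\ge1,\ z_0=x,\ z_n=y\Big\},
\]
which is automatically symmetric, vanishes on the diagonal, obeys the triangle inequality, and satisfies $\rho\le F$; since every $F$-ball then sits inside a $\rho$-ball, one inclusion $\tau_\rho\subseteq\tau_F$ is free. What I would then need is the reverse estimate, that a chain from $x$ to $y$ of small total $F$-length forces $F(x,y)$ to be small, obtained by iterating (iii-A) along the chain while keeping every invocation anchored at the fixed base point $x$.

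The hard part, and the reason the hypotheses are phrased so delicately, is precisely that the tolerance $\phi(a,\varepsilon)$ in (iii-A) is \emph{pointwise}: a naive induction along a chain changes base point at every link, so the thresholds refuse to combine and $\rho$ can collapse, which is exactly the gap separating metric spaces from general Moore spaces. The crux is therefore to convert the local ball-control supplied by (iii-A) into one global, uniform object -- either a star-refining sequence of open covers, which metrizes $X$ through the Alexandroff--Urysohn theorem, or a $\sigma$-discrete base, which together with the regularity of the second step metrizes $X$ through the Bing--Nagata--Smirnov theorem. I expect this pointwise-to-uniform upgrade to be the genuine obstacle, and I would organise the inductive use of (iii-A) so that the shrinking balls around each point assemble, level by level, into such a $\sigma$-discrete base.
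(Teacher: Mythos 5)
The paper offers no proof of this statement at all: Theorem \ref{ew} is quoted verbatim from Frink's 1937 paper as the classical Niemytzki--Wilson metrization theorem, so there is nothing internal to compare your attempt against. Judged on its own terms, your proposal gets the preliminary reductions right but stops exactly where the theorem begins. The two contradiction arguments showing that a failure of (iii-A) at some $a$ and $\varepsilon$ violates (iii-B) (feed $a_n=p_n$, $b_n=q_n$ into it) and violates (iii-C) (take $k=\varepsilon$, $b=q_n$, $c=p_n$, and choose $n$ with $2/n<r$) are both correct, so reducing everything to (iii-A) is legitimate. The observations that $X$ is $T_1$, first countable, Hausdorff, and that (iii-A) yields regularity and a development are also sound, and you are right both that the chaining pseudometric $\rho$ can collapse under a merely pointwise coherence condition and that "regular Moore space" alone does not imply metrizable (the normal Moore space problem, plus outright ZFC counterexamples, rule that shortcut out).

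The genuine gap is that the decisive step is announced but never executed. Everything you establish --- regular, first countable, developable --- is strictly weaker than metrizability, and the entire content of the Niemytzki--Wilson theorem is the "pointwise-to-uniform upgrade" you defer to the final sentence: actually producing a $\sigma$-discrete base (for Bing's theorem) or a star-refining sequence of open covers whose stars form neighborhood bases (for Alexandroff--Urysohn) from the pointwise thresholds $\phi(a,\varepsilon)$. You say you "would organise the inductive use of (iii-A) so that the shrinking balls assemble, level by level, into such a $\sigma$-discrete base," but no such organisation is given, and it is not routine: the whole difficulty is that $\phi(a,\varepsilon)$ varies with $a$, so the covers $\{B(a,\phi(a,2^{-n}))\}_{a\in X}$ have no a priori star-refinement relation to one another, and some device (well-ordering the space and inducting as in Bing's proof, or Frink's own argument on page 137 of the cited paper) is needed to extract discreteness or star-refinement anyway. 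As written, the proposal is an accurate map of where the proof must go, not a proof; the central construction must be supplied before the statement can be considered established.
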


Niemytski and Wilson showed that the three conditions (iii-A), (iii-B), (iii-C) are equivalent. Any distance function which satisfies any one of the three conditions, is called locally regular. Now in the upcoming theorem we present a shorter proof of the metrizability of $b$-metric spaces.

\begin{theorem}\label{FMSS}
Let $(X,D,K),~K>0$ be a $b$-metric space. Then $X$ is metrizable.
\end{theorem}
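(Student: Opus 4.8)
The plan is to apply the Niemytski--Wilson metrization theorem (Theorem~\ref{ew}) directly to the $b$-metric $D$ itself, taking on $X$ the topology $\tau_D$ induced by $D$: a set $U \subseteq X$ is declared open exactly when for every $x \in U$ there is $r > 0$ with $B_D(x,r) := \{\, y \in X : D(x,y) < r \,\} \subseteq U$. This is precisely the semimetric topology generated by the $D$-balls, which is the standard topology carried by a $b$-metric space and the setting in which Theorem~\ref{ew} is phrased; ``metrizability of $X$'' is understood with respect to $\tau_D$.

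First I would note that conditions (i) and (ii) of Theorem~\ref{ew} are nothing but axioms (i) and (ii) in the definition of a $b$-metric, so they hold automatically. Thus the whole proof reduces to checking one of the three equivalent local-regularity conditions (iii-A), (iii-B), (iii-C).

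I would verify (iii-A), since it falls straight out of the relaxed triangle inequality. Given $a \in X$ and $\varepsilon > 0$, I would set $\phi(a,\varepsilon) := \varepsilon/(2K)$; note that this choice does not even depend on $a$. If $b, c \in X$ satisfy $D(a,b) < \phi(a,\varepsilon)$ and $D(b,c) < \phi(a,\varepsilon)$, then axiom (iii) gives
\[
D(a,c) \le K\bigl[ D(a,b) + D(b,c) \bigr] < 2K\,\phi(a,\varepsilon) = \varepsilon,
\]
which is exactly the conclusion required in (iii-A). Together with (i) and (ii), Theorem~\ref{ew} then yields that $(X,\tau_D)$ is metrizable.

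The metric computation is entirely routine; the only point that demands genuine care --- and which I regard as the main, though modest, obstacle --- is the topological bookkeeping. Since in a $b$-metric space the balls $B_D(x,r)$ need not be open, one must state explicitly that Theorem~\ref{ew} is being applied to the semimetric topology $\tau_D$ generated by these balls and confirm that this coincides with the usual topology of the $b$-metric space. Once that identification is in place, the argument uses no assumption on the continuity of $D$ in any variable, which is exactly the improvement over Corollary~\ref{con2}, and it is valid for every $K > 0$, the uniform choice $\phi = \varepsilon/(2K)$ requiring no restriction such as $K \ge 1$.
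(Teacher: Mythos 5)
Your proposal is correct and takes essentially the same approach as the paper: both apply the Niemytski--Wilson theorem directly to the $b$-metric $D$, observe that conditions (i) and (ii) are the first two $b$-metric axioms, and extract local regularity from the relaxed triangle inequality. The only difference is that you verify condition (iii-A) with $\phi(a,\varepsilon)=\varepsilon/(2K)$ whereas the paper verifies (iii-C) with $r=t/K$; since the three conditions are equivalent (and the paper itself remarks that the remaining ones follow similarly), this is immaterial.
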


\begin{proof}
 Let $(X,D,K)$ be a $b$-metric space. By the definition of a $b$-metric space, the distance function $D:X \times X\rightarrow [0,\infty)$ on $X$ satisfies the first two conditions of Niemytski and Wilson's metrization result, i.e,
\begin{enumerate}
\item[(i)] $D(x,y)=0\Longleftrightarrow x=y~~\mbox{for all}~(x,y)\in X \times X$;

\item[(ii)] $D(x,y)=D(y,x)~~\mbox{for all}~(x,y)\in X \times X$.
\end{enumerate}
Now we prove the third condition, i.e., the "locally regular" condition and for that, we prove the condition (iii-C) of Theorem \ref{ew}. Let $a\in X$ and $t$ be a positive real number. Assume that $b \in X$ such that $D(a,b) \geq t.$ If $c$ is any point in $X$ then by the definition of a $b$-metric space we have,

$$D(a,b)\leq K\Big(D(a,c)+ D(c,b)\Big)$$
$$\Longrightarrow \Big(D(a,c)+ D(c,b)\Big) \geq \frac{t}{K}=r>0.$$

This shows that the distance function $D:X \times X\rightarrow [0,\infty)$ of a $b$-metric space satisfies the locally regular condition. Similarly conditions (iii-A) and (iii-B) of Theorem \ref{ew} are easily satisfied by any $b$-metric. Consequently, by  Niemytski and Wilson's metrization theorem we can conclude that the $b$-metric space $X$ is metrizable.
\end{proof}
\begin{remark}Certainly the above metrizability result is superior, in some sense, to the ones in \cite{ai,an,C}. 
\end{remark}

\begin{remark}
From Theorem \ref{FMSS}, we can conclude that if $(X,D,K),~ K>0$ is a $b$-metric space, then there exists a metric $d:X \times X\rightarrow [0,\infty)$ on $X$ such that $X$ is metrizable with respect to the metric $d.$ Thus, the topological properties of $b$-metric spaces discussed in \cite[\, Proposition 2, Proposition 3]{kh} are equivalent to those of the standard metric spaces.
\end{remark}

\section{ \bf Metrizability of $\mathcal{F}$-metric spaces}

Recently, Jleli and Samet \cite{JS1} proposed a new generalization of the usual notion of metric spaces. By means of a certain class of functions, the authors defined the notion of an $\mathcal{F}$-metric space. Let us first recall the definition of such  spaces. Let $\mathcal{F}$ denote the class of functions $f:(0,\infty)\rightarrow \mathbb{R}$ which satisfy the following conditions:

($\mathcal{F}_1$) $f$ is non-decreasing, i.e., $ 0<s<t\Rightarrow f(s)\leq f(t)$.

($\mathcal{F}_2$) For every sequence $\{t_n\}_{n\in \mathbb{N}}\subseteq (0,+\infty)$, we have
$$\lim_{n\to +\infty}t_n=0 \Longleftrightarrow \lim_{n\to +\infty}f(t_n)=-\infty.$$


The definition of an $\mathcal{F}$-metric space has been introduced as follows.

\begin{definition} \cite[\, Definition 2.1.]{JS1} \label{D1}
Let $X$ be a non-empty set and $D:X\times X\rightarrow [0,\infty)$ be a given mapping. Suppose there exists $(f,\alpha)\in \mathcal{F}\times [0,\infty)$ such that:
\begin{enumerate}
\item[(D1)] $D(x,y)=0\Longleftrightarrow x=y~~\mbox{for all}~(x,y)\in X \times X$.
\item[(D2)] $D(x,y)=D(y,x)~~\mbox{for all}~(x,y)\in X \times X$.
\item[(D3)] For every $(x,y)\in X\times X$, for each $N\in \mathbb{N},~ N\geq2$ and for every $(u_i)_{i=1}^{N}\subseteq X $ with $(u_1,u_N)=(x,y)$, we have
$$D(x,y)>0 \Longrightarrow f(D(x,y))\leq f\left(\sum_{i=1}^{N-1}D(u_i,u_{i+1})\right)+ \alpha.$$
\end{enumerate}
 
\end{definition}
Then $D$ is said to be an $\mathcal{F}$-metric on $X$ and the pair $(X,D)$ is said to be an $\mathcal{F}$-metric space. Hence, the class of all $\mathcal{F}$-metric spaces contain the class of all metric spaces for any $f\in \mathcal{F}$ and $\alpha=0.$ The following definitions and propositions from \cite{JS1} will be needed.

\begin{definition} \cite[\, Definition 4.1.]{JS1}
Let $(X,D)$ be an $\mathcal{F}$-metric space. A subset $C$ of $X$ is said to be $\mathcal{F}$-open if for every $x \in C$, there is some $r>0$ such that $B(x,r)\subset C$ where $$B(x,r)=\{y\in X : D(y,x)<r\}.$$ We say that a subset $C$ of $X$ is $\mathcal{F}$-closed if $X\setminus C$ is $\mathcal{F}$-open. The family of all $\mathcal{F}$-open subsets of $X$ is denoted by $\tau_{\mathcal{F}}.$
\end{definition}

\begin{definition} \cite[\, Definition 4.3.]{JS1}
Let $(X,D)$ be an $\mathcal{F}$-metric space. Let $\{x_n\}_{n\in \mathbb{N}}$ be a sequence in $X.$ We say that $\{x_n\}_{n\in \mathbb{N}}$ is $\mathcal{F}$-convergent to $x\in X$ if $\{x_n\}_{n\in \mathbb{N}}$ is convergent to $x\in X$ with respect to the topology $\tau_{\mathcal{F}}.$
\end{definition}

\begin{proposition}  \cite[\, Proposition 4.4.]{JS1} \label{p1}
Let $(X,D)$ be an $\mathcal{F}$-metric space. Then, for any nonempty subset $A$ of $X$, we have $$x\in \bar{A},~ r>0\Longrightarrow B(x,r)\cap A \neq \phi.$$
\end{proposition}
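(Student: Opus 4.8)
The plan is to prove Proposition~\ref{p1}, which states that for any nonempty subset $A$ of an $\mathcal{F}$-metric space $(X,D)$, if $x\in\bar{A}$ and $r>0$ then $B(x,r)\cap A\neq\emptyset$. Since the topology $\tau_{\mathcal{F}}$ is generated by the balls $B(x,r)$, the natural strategy is a proof by contradiction: I would assume that $B(x,r)\cap A=\emptyset$ for some $r>0$ and derive that $x$ cannot lie in the closure $\bar{A}$.

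First I would observe that if $B(x,r)\cap A=\emptyset$, then every point of $A$ lies outside the ball $B(x,r)$, i.e.\ $D(y,x)\geq r$ for all $y\in A$. The key step is then to exhibit an $\mathcal{F}$-open set containing $x$ that is disjoint from $A$, since this would directly contradict $x\in\bar{A}$ (the closure being the smallest $\mathcal{F}$-closed set containing $A$, a point in the closure must meet $A$ in every open neighborhood). The most economical candidate for such an open neighborhood is the ball $B(x,r)$ itself. Here I would need to verify that $B(x,r)\in\tau_{\mathcal{F}}$, i.e.\ that every open ball is $\mathcal{F}$-open; this is the analogue of the standard metric-space fact and should follow from the definition of $\tau_{\mathcal{F}}$ together with a triangle-type inequality argument using property (D3).

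The main obstacle, and the step I would treat most carefully, is precisely establishing that balls are $\mathcal{F}$-open, because the ``triangle inequality'' in an $\mathcal{F}$-metric space is not the usual one but rather the inequality $f(D(x,y))\leq f\!\left(\sum_{i=1}^{N-1}D(u_i,u_{i+1})\right)+\alpha$ mediated by the function $f\in\mathcal{F}$. To show $B(x,r)$ is open, I would take an arbitrary $z\in B(x,r)$, so $D(z,x)<r$, and seek $\rho>0$ with $B(z,\rho)\subseteq B(x,r)$. Using (D3) with the chain $x,z,w$ for $w\in B(z,\rho)$ gives $f(D(w,x))\leq f(D(w,z)+D(z,x))+\alpha$, and I would exploit the monotonicity $(\mathcal{F}_1)$ and the limiting property $(\mathcal{F}_2)$ of $f$ to choose $\rho$ small enough that $D(w,z)+D(z,x)<r$, hence $D(w,x)<r$. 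This may already be available in \cite{JS1} as the statement that $\tau_{\mathcal{F}}$ makes each ball open; if so, I would simply cite it.

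Once openness of balls is in hand, the argument closes quickly: assuming toward a contradiction that $x\in\bar{A}$ but $B(x,r)\cap A=\emptyset$, the set $B(x,r)$ is an $\mathcal{F}$-open neighborhood of $x$ disjoint from $A$. Then $X\setminus B(x,r)$ is an $\mathcal{F}$-closed set containing $A$ but not $x$, whence $\bar{A}\subseteq X\setminus B(x,r)$ and therefore $x\notin\bar{A}$, contradicting the hypothesis. This yields $B(x,r)\cap A\neq\emptyset$ and completes the proof. The whole argument is essentially the transcription of the classical metric-space characterization of closure points into the $\mathcal{F}$-metric setting, with the only genuine content residing in the openness of balls.
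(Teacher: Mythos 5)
The paper itself states this proposition only as a recalled result from \cite{JS1}, with no proof of its own, so your argument has to stand alone. Its closing step is fine, but the lemma it rests on --- that every ball $B(x,r)$ is $\mathcal{F}$-open --- is exactly where it breaks, and this is not a gap you can close by being more careful. The inference ``choose $\rho$ small enough that $D(w,z)+D(z,x)<r$, hence $D(w,x)<r$'' is invalid in an $\mathcal{F}$-metric space: from (D3) and $(\mathcal{F}_1)$ you only get $f(D(w,x))\leq f\bigl(D(w,z)+D(z,x)\bigr)+\alpha\leq f(r)+\alpha$, and when $\alpha>0$ this does not force $D(w,x)<r$. To conclude $D(w,x)<r$ you would need $f\bigl(D(w,z)+D(z,x)\bigr)<f(r)-\alpha$, which by $(\mathcal{F}_2)$ asks the whole sum $D(w,z)+D(z,x)$ to fall below a threshold $\delta$ determined by $f(r)-\alpha$; but the sum is bounded below by the fixed quantity $D(z,x)$, which for $z$ near the ``boundary'' of $B(x,r)$ need not satisfy $f(D(z,x))<f(r)-\alpha$ even though $D(z,x)<r$. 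This is the same phenomenon as in $b$-metric spaces: balls need not be open in $\tau_{\mathcal{F}}$, and \cite{JS1} does not supply the statement you hoped to cite.

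The repair is to notice that you do not need $B(x,r)$ to be open, only that it contains an $\mathcal{F}$-open set containing $x$. Assume $B(x,r)\cap A=\emptyset$ and set $U=\{y\in X:\ B(y,\rho)\subseteq X\setminus A \mbox{ for some } \rho>0\}$. Then $x\in U$ (witnessed by $\rho=r$) and $U\subseteq X\setminus A$ (since $y\in B(y,\rho)$). To see that $U$ is $\mathcal{F}$-open, fix $y\in U$ with $B(y,\rho)\subseteq X\setminus A$, use $(\mathcal{F}_2)$ to pick $\delta>0$ with $0<t<\delta\Rightarrow f(t)<f(\rho)-\alpha$, and check via (D3) and $(\mathcal{F}_1)$ that $w\in B(y,\delta/2)$ implies $B(w,\delta/2)\subseteq B(y,\rho)\subseteq X\setminus A$, so that $w\in U$ and hence $B(y,\delta/2)\subseteq U$. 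Thus $x$ lies in an $\mathcal{F}$-open set disjoint from $A$, so $x\notin\bar{A}$, which is the contradiction you wanted. The essential point your sketch misses is that the ``half-$\delta$'' threshold must be recomputed at each point $y$ from its own radius $\rho$; that local bookkeeping is what replaces the false global openness of balls.
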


\begin{proposition}  \cite[\, Proposition 4.5.]{JS1} \label{p2}
Let $(X,D)$ be an $\mathcal{F}$-metric space. Let $\{x_n\}_{n\in \mathbb{N}}$ be a sequence in $X$ and $x\in X.$ Then the following are equivalent:
\begin{enumerate}
\item[(i)] $\{x_n\}_{n\in \mathbb{N}}$ is $\mathcal{F}$-convergent to $x.$

\item[(ii)] $D(x_n,x)\rightarrow 0$ as $n\rightarrow \infty.$
\end{enumerate}
\end{proposition}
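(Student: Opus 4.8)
The plan is to prove the two implications separately, drawing only on the definition of the topology $\tau_{\mathcal{F}}$ and on Proposition \ref{p1}.

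The implication (ii)$\Rightarrow$(i) is routine. Assuming $D(x_n,x)\to 0$, I would take any $U\in\tau_{\mathcal{F}}$ with $x\in U$; by the definition of an $\mathcal{F}$-open set there is $r>0$ with $B(x,r)\subseteq U$, and since $D(x_n,x)\to 0$ there is $N\in\mathbb{N}$ with $D(x_n,x)<r$, hence $x_n\in B(x,r)\subseteq U$, for all $n\ge N$. Thus $\{x_n\}$ converges to $x$ in $\tau_{\mathcal{F}}$, i.e.\ it is $\mathcal{F}$-convergent to $x$.

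The substance lies in (i)$\Rightarrow$(ii). Here the delicate point is that an open ball $B(x,\varepsilon)$ need not itself be $\mathcal{F}$-open, so it cannot be used verbatim as a neighbourhood of $x$; the key step I would isolate is the claim that, nevertheless, the centre is always an interior point, i.e.\ $x\in\mathrm{int}_{\tau_{\mathcal{F}}}\big(B(x,\varepsilon)\big)$ for every $\varepsilon>0$. I would establish this by contradiction using Proposition \ref{p1}: if $x\notin\mathrm{int}\big(B(x,\varepsilon)\big)$, then $x$ belongs to the closure of $A:=X\setminus B(x,\varepsilon)$, and applying Proposition \ref{p1} to this set $A$ with $r=\varepsilon$ would force $B(x,\varepsilon)\cap A\neq\emptyset$, which is absurd since $A$ is precisely the complement of $B(x,\varepsilon)$.

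Granting the claim, the conclusion follows quickly. Fixing $\varepsilon>0$, the interior claim supplies an $\mathcal{F}$-open set $U$ with $x\in U\subseteq B(x,\varepsilon)$; since $\{x_n\}$ is $\mathcal{F}$-convergent to $x$, there is $N\in\mathbb{N}$ with $x_n\in U$, and hence $D(x_n,x)<\varepsilon$, for all $n\ge N$. As $\varepsilon>0$ is arbitrary, $D(x_n,x)\to 0$. I expect the interior claim to be the only genuine obstacle; once it is in hand via Proposition \ref{p1}, both directions reduce to unwinding the definitions of $\mathcal{F}$-open sets and of convergence in $\tau_{\mathcal{F}}$.
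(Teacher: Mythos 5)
Your proposal is correct, but note that this paper does not actually prove Proposition \ref{p2}: it is quoted verbatim from \cite{JS1} (Proposition 4.5 there) and used as a black box, so there is no in-paper argument to compare yours against. Judged on its own terms, your two implications are sound. The direction (ii)$\Rightarrow$(i) is indeed routine, and you correctly identify the one genuine obstacle in (i)$\Rightarrow$(ii), namely that a ball $B(x,\varepsilon)$ need not belong to $\tau_{\mathcal{F}}$, so one must show $x\in\mathrm{int}_{\tau_{\mathcal{F}}}\bigl(B(x,\varepsilon)\bigr)$. Your derivation of this from Proposition \ref{p1} via the identity $\mathrm{int}(S)=X\setminus\overline{X\setminus S}$ is valid: if $x\notin\mathrm{int}\bigl(B(x,\varepsilon)\bigr)$ then $x\in\overline{A}$ with $A=X\setminus B(x,\varepsilon)$ (in particular $A\neq\emptyset$, so Proposition \ref{p1} applies), and taking $r=\varepsilon$ gives $B(x,\varepsilon)\cap A\neq\emptyset$, a contradiction. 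This is essentially the route taken in the source \cite{JS1}, where the closure characterization (their Proposition 4.4, quoted here as Proposition \ref{p1}) is proved precisely so that it can feed into the convergence characterization. The only cosmetic point worth flagging is the asymmetry in the paper's definition $B(x,r)=\{y\in X: D(y,x)<r\}$; your argument silently uses $D(x_n,x)=D(x,x_n)$, which is of course licensed by axiom (D2), but a one-line mention would make the proof airtight.
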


\begin{proposition}  \cite[\, Proposition 4.6.]{JS1} \label{p3}
Let $(X,D)$ be an $\mathcal{F}$-metric space and $\{x_n\}_{n\in \mathbb{N}}$ be a sequence in $X.$ 
Then $$(x,y)\in X\times X, \lim_{n\rightarrow \infty}D(x_n,x)= \lim_{n\rightarrow \infty}D(x_n,y)=0 \Longrightarrow x=y.$$
\end{proposition}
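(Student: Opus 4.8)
The plan is to argue by contradiction, exploiting the chain inequality (D3) together with the defining property ($\mathcal{F}_2$) of the class $\mathcal{F}$. Suppose that $\lim_{n\to\infty} D(x_n,x) = \lim_{n\to\infty} D(x_n,y) = 0$ but $x \neq y$. By (D1), $x \neq y$ forces $D(x,y) > 0$, so the hypothesis of the implication in (D3) is met and the associated function $f$ may legitimately be evaluated at $D(x,y)$, giving a fixed real number $f(D(x,y))$.

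The key step is to apply (D3) to a three-term chain passing through $x_n$. Taking $N = 3$ and the points $(u_1,u_2,u_3) = (x,x_n,y)$, which satisfies $(u_1,u_3) = (x,y)$, condition (D3) yields
$$f(D(x,y)) \le f\big(D(x,x_n) + D(x_n,y)\big) + \alpha$$
for every $n$. Writing $t_n := D(x,x_n) + D(x_n,y)$ and using the symmetry (D2) to rewrite $D(x,x_n) = D(x_n,x)$, the two hypotheses give $t_n \to 0$ as $n \to \infty$.

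Now I invoke ($\mathcal{F}_2$): since $t_n \to 0$ with each $t_n \in (0,\infty)$, we have $f(t_n) \to -\infty$, and hence $f(t_n) + \alpha \to -\infty$ because $\alpha$ is a fixed constant. But the left-hand side $f(D(x,y))$ is a fixed real number, so the displayed inequality $f(D(x,y)) \le f(t_n) + \alpha$ cannot persist for all large $n$, a contradiction. Therefore $x = y$, as required.

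The only point requiring care is the well-definedness of $f(t_n)$: since $f$ is defined only on $(0,\infty)$ and ($\mathcal{F}_2$) speaks of sequences in $(0,\infty)$, I must ensure $t_n > 0$. This is automatic under the contradiction hypothesis, for if $t_n = 0$ for some $n$ then $D(x,x_n) = D(x_n,y) = 0$, whence (D1) already gives $x = x_n = y$, contradicting $x \neq y$; thus $t_n > 0$ for every $n$ and no step evaluates $f$ outside its domain. I expect this domain bookkeeping to be the main (and essentially only) technical obstacle, the structural argument itself being very short.
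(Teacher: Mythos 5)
Your proof is correct, and it is essentially the same argument as the one in the cited source [JS1] (the paper under review only quotes Proposition 4.6 without reproving it): assume $x\neq y$, apply (D3) to the chain $(x,x_n,y)$, and let $(\mathcal{F}_2)$ drive the right-hand side to $-\infty$ while the left-hand side stays fixed. Your care about $t_n>0$ so that $f(t_n)$ is defined is exactly the right bookkeeping and is handled correctly.
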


Very recently Som et. all.  \cite{SAL}  proved that this newly defined structure is metrizable by using the definition of metrizability. However, their proof is technical and a bit lengthy. In this short note, we give two alternative proofs of  metrizability  of this structure  using Chittenden's metrization theorem \cite{CD} and metrization theorem due to Niemytski and Wilson (discussed in Theorem \ref{ew}). It may be noted that these proofs are very simple.  Before proceeding to the metrizability result for $\mathcal{F}$-metric spaces, we recall the metrization result due to Chittenden \cite{CD}.

\begin{theorem} \cite{CD} \label{ch}
Let $X$ be a topological space and $F:X \times X\rightarrow [0,\infty)$ be a distance function on $X$. 
If the distance function $F$ satisfies the following conditions:
\begin{enumerate}
\item[(i)] $F(x,y)=0\Longleftrightarrow x=y~~\mbox{for all}~(x,y)\in X \times X$;

\item[(ii)] $F(x,y)=F(y,x),~~\mbox{for all}~(x,y)\in X \times X$;

\item[(iii)](Uniformly regular) For every $\varepsilon>0$ and $x,y,z \in X$ there exists $\phi(\varepsilon)>0$ such that if $F(x,y)<\phi(\varepsilon)$ and $F(y,z)<\phi(\varepsilon)$ then $F(x,z)<\varepsilon,$
\end{enumerate}
then the topological space $X$ is metrizable.
\end{theorem}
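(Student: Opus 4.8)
The plan is to derive Chittenden's theorem from the classical Alexandroff--Urysohn (Frink) metrization lemma, which manufactures a genuine metric out of a suitably descending chain of entourages; the uniform regularity hypothesis (iii) is precisely what makes such a chain available. Throughout I regard the topology on $X$ as the one generated by the $F$-balls $B(x,r)=\{y\in X: F(x,y)<r\}$, so that the goal is to produce a metric $d$ whose metric topology coincides with it. First I would normalize the modulus: for each $\varepsilon>0$ fix $\phi(\varepsilon)>0$ as in (iii), and, replacing $\phi(\varepsilon)$ by $\min\{\varepsilon,\inf_{s\ge \varepsilon}\phi(s)\}$, assume without loss of generality that $\phi$ is non-decreasing with $\phi(\varepsilon)\le\varepsilon$. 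Applying (iii) twice --- first to $a,b,c$ and then to $a,c,d$ --- then shows that $F(a,b),F(b,c),F(c,d)<\phi(\phi(\varepsilon))$ forces $F(a,d)<\varepsilon$; this three-step estimate is exactly what feeds the lemma.

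Next I would build the entourages. Choose a strictly decreasing sequence $s_n\downarrow 0$ with $s_{n+1}\le\phi(\phi(s_n))$ for every $n$ (for instance $s_{n+1}=\min\{\phi(\phi(s_n)),2^{-(n+1)}\}$), and set
$$U_n=\{(x,y)\in X\times X: F(x,y)<s_n\}.$$
Conditions (i) and (ii) give that each $U_n$ contains the diagonal $\Delta$ and is symmetric, while the three-step estimate yields the crucial nesting $U_{n+1}\circ U_{n+1}\circ U_{n+1}\subseteq U_n$. Finally, if $(x,y)\in\bigcap_n U_n$ then $F(x,y)<s_n\to 0$, hence $F(x,y)=0$ and $x=y$ by (i); thus $\bigcap_n U_n=\Delta$.

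With these four properties in hand, the Alexandroff--Urysohn/Frink lemma produces a metric $d$ on $X$ satisfying the sandwich
$$U_{n}\subseteq\{(x,y): d(x,y)<2^{-n+1}\}\subseteq U_{n-1}\qquad(n\ge 1).$$
From this the $d$-balls and the $F$-balls about any point are mutually cofinal, so they determine the same neighbourhood filter at each point; consequently $\tau_d$ agrees with the topology generated by $F$, and $X$ is metrizable.

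The step I expect to be the main obstacle is the lemma itself: showing that the chaining definition $d(x,y)=\inf\sum_{i}g(x_i,x_{i+1})$, with $g\approx 2^{-n}$ on $U_n\setminus U_{n+1}$ and the infimum taken over finite chains from $x$ to $y$, does not collapse to the zero function. The triangle inequality for $d$ is automatic from the infimum over concatenated chains, but the lower bound $d(x,y)\ge\tfrac12\,g(x,y)$ that prevents degeneration --- and validates the sandwich --- is exactly where the threefold composition $U_{n+1}^{3}\subseteq U_n$ is consumed, and it is the only genuinely delicate estimate. If Frink's lemma may be cited as known, the whole argument reduces to the bookkeeping of the modulus $\phi$ carried out in the first two steps.
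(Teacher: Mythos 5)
The paper never proves this statement: Theorem \ref{ch} is quoted verbatim from Chittenden's 1917 paper \cite{CD} and used as a black box, so there is no ``paper proof'' to match yours against. Your argument is, in substance, a correct proof, and it is in fact the standard modern one: it is essentially A.~H.~Frink's simplification of Chittenden's original lengthy direct construction, via the chain of symmetric entourages $U_n\supseteq\Delta$ with $U_{n+1}\circ U_{n+1}\circ U_{n+1}\subseteq U_n$ and $\bigcap_n U_n=\Delta$, followed by the Alexandroff--Urysohn/Frink metrization lemma and the mutual cofinality of $d$-balls and $F$-balls (amusingly, the reference \cite{FR} that the paper uses for Theorem \ref{ew} is exactly the Frink paper where this device appears). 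Your reading of the hypothesis --- that the topology on $X$ is the one in which a set is open iff each of its points has an $F$-ball inside it --- is also the intended one, since that is how $\tau_{\mathcal{F}}$ and the $b$-metric topology are defined in the paper.

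Two caveats. First, your ``without loss of generality'' normalization is the one step that fails as written: $\inf_{s\ge \varepsilon}\phi(s)$ can be $0$ (nothing prevents the given modulus from tending to $0$ as $s\to\infty$, e.g.\ $\phi(s)=\min\{s,1/s\}$), so the normalized modulus need not be positive. The fix is to drop monotonicity altogether, since it is never really needed: define the radii recursively by $s_{n+1}=\min\bigl\{\phi(\phi(s_n)),\,\phi(s_n),\,2^{-(n+1)}\bigr\}$, which is positive, and the three-step estimate goes through by applying (iii) first at level $\phi(s_n)$ (to $a,b,c$) and then at level $s_n$ (to $a,c,d$). Second, given the paper's own toolkit there is a one-line alternative that you may wish to note: a uniform modulus $\phi(\varepsilon)$ is in particular a local modulus $\phi(a,\varepsilon):=\phi(\varepsilon)$, so condition (iii) here trivially implies condition (iii-A) of Theorem \ref{ew}, and Chittenden's theorem follows immediately from the Niemytski--Wilson theorem as stated in the paper. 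Your route is heavier but self-contained modulo Frink's lemma; the short route is free once Theorem \ref{ew} is accepted.
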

Now in the upcoming theorem, we present, by two different approaches, two short proofs of the metrizability of $\mathcal{F}$-metric spaces. 
The first approach is by using Chittenden's metrization theorem, while the second one is by using Niemytski and Wilson's metrization theorem. 

\begin{theorem} \label{FMS}
Let $(X,D)$ be an $\mathcal{F}$-metric space with $(f,\alpha)\in \mathcal{F}\times [0,\infty)$. Then $X$ is metrizable.
\end{theorem}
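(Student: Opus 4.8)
The plan is to verify the hypotheses of the two classical metrization theorems already recorded above (Chittenden's Theorem~\ref{ch} and the Niemytski--Wilson Theorem~\ref{ew}), applied directly to the $\mathcal{F}$-metric $D$ regarded as a distance function on $(X,\tau_{\mathcal{F}})$. Conditions (i) and (ii) of both theorems are nothing but (D1) and (D2) of Definition~\ref{D1}, so the entire burden falls on the third (``regularity'') condition. The single lever I would use throughout is the auxiliary function $f$: property $(\mathcal{F}_2)$ forces $f(t)\to-\infty$ as $t\to 0^+$, while $(\mathcal{F}_1)$ lets me transport inequalities between $D$-values and their $f$-images. The only delicate point in using $(\mathcal{F}_1)$ is that $f$ is merely non-decreasing, so I must argue contrapositively: since $D(x,z)\geq\varepsilon$ would force $f(D(x,z))\geq f(\varepsilon)$, it suffices to establish the strict bound $f(D(x,z))<f(\varepsilon)$ in order to conclude $D(x,z)<\varepsilon$.

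For the first (Chittenden) approach I would prove the uniform regularity condition (iii) of Theorem~\ref{ch}. Fix $\varepsilon>0$. Applying (D3) with $N=3$ to the chain $(x,y,z)$, whose endpoints are $x$ and $z$, gives, whenever $D(x,z)>0$,
$$f(D(x,z))\leq f\big(D(x,y)+D(y,z)\big)+\alpha.$$
If $D(x,y)<\phi$ and $D(y,z)<\phi$, then by monotonicity the right-hand side is at most $f(2\phi)+\alpha$. Because $f(2\phi)\to-\infty$ as $\phi\to 0^+$, I may pick $\phi=\phi(\varepsilon)>0$, depending on $\varepsilon$ alone, so small that $f(2\phi)+\alpha<f(\varepsilon)$; the contrapositive monotonicity argument then yields $D(x,z)<\varepsilon$, the case $D(x,z)=0$ being trivial. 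Crucially $\phi$ is independent of the points $x,y,z$, which is exactly what uniform regularity demands, so Theorem~\ref{ch} applies.

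For the second (Niemytski--Wilson) approach I would instead verify the sequential condition (iii-B) of Theorem~\ref{ew}, since it meshes most cleanly with $(\mathcal{F}_2)$. Given $a\in X$ and sequences with $D(a_n,a)\to 0$ and $D(a_n,b_n)\to 0$, I apply (D3) to the chain $(b_n,a_n,a)$, with endpoints $b_n$ and $a$, to get $f(D(b_n,a))\leq f\big(D(b_n,a_n)+D(a_n,a)\big)+\alpha$ for those $n$ with $D(b_n,a)>0$; the argument of $f$ on the right tends to $0$, so by $(\mathcal{F}_2)$ its $f$-value tends to $-\infty$, forcing $f(D(b_n,a))\to-\infty$ and hence $D(b_n,a)\to 0$ by the converse half of $(\mathcal{F}_2)$. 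I expect the main obstacle to be bookkeeping rather than conceptual: one must carefully separate the indices where $D(\cdot,\cdot)=0$ (so that (D3) does not apply but the term is already zero) from those where it is positive, and must invoke $(\mathcal{F}_2)$ in the correct direction at each step. Once this is handled, the same computation in fact delivers all three equivalent regularity conditions, and metrizability of $(X,\tau_{\mathcal{F}})$ follows immediately from either theorem.
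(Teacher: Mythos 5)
Your proposal is correct and follows essentially the same route as the paper: conditions (i)--(ii) come directly from (D1)--(D2), and the third condition is verified twice, once as Chittenden's uniform regularity using (D3) with $N=3$ together with $(\mathcal{F}_1)$, $(\mathcal{F}_2)$, and once as the sequential condition (iii-B) of the Niemytski--Wilson theorem by the same mechanism. The only cosmetic differences are your use of $f(2\phi)$ in place of the paper's $\phi(\varepsilon)=\delta/2$ choice and your explicit handling of the zero-distance indices, both of which are sound.
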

\begin{proof}

\textbf{Approach I.}

Let $X$ be an $\mathcal{F}$-metric space with $(f,\alpha)\in \mathcal{F}\times [0,\infty)$. By the definition of an $\mathcal{F}$-metric space, the distance function $D:X \times X\rightarrow [0,\infty)$ satisfies the first two conditions of Chittenden's metrization result, i.e,
\begin{enumerate}
\item[(i)] $D(x,y)=0\Longleftrightarrow x=y~~\mbox{for all}~(x,y)\in X \times X$.

\item[(ii)] $D(x,y)=D(y,x)~~\mbox{for all}~(x,y)\in X \times X$.
\end{enumerate}
Now we prove the third condition, i.e., the "uniformly regular" condition. Let $\varepsilon>0$ and $x,y,z \in X.$ If $x=z$, then $D(x,z)=0$. So in this case $\phi(\varepsilon)=c$ where $c$ is any positive real number will serve the purpose. Let $x\neq z.$ Then $D(x,z)>0.$ So by the definition of an $\mathcal{F}$-metric space we have,
\begin{equation}
f(D(x,z))\leq f(D(x,y)+D(y,z))+\alpha.
\label{aa}
\end{equation}
By the $\mathcal{F}_2$ condition, for $(f(\varepsilon)-\alpha)\in \mathbb{R}$ there exists $\delta>0$ such that $0<t<\delta \Longrightarrow f(t)<f(\varepsilon)-\alpha.$ Let us choose $\phi(\varepsilon)=\frac{\delta}{2}.$ If $D(x,y)< \frac{\delta}{2}$ and $D(y,z)<\frac{\delta}{2}$ then $D(x,y)+D(y,z)<\delta.$ So by the equation \ref{aa}, we have 
$$ f(D(x,z))<f(\varepsilon)$$
$$\Longrightarrow D(x,z)<\varepsilon.$$
This shows that the distance function $D$ of an $\mathcal{F}$-metric space satisfies the uniformly regular condition. Consequently,  by Chittenden's metrization result we can conclude that the $\mathcal{F}$-metric space $X$ is metrizable.

\textbf{Approach II.}

In this part we show that any $\mathcal{F}$-metric $D:X \times X\rightarrow [0,\infty)$ satisfies condition (iii-B) of Theorem \ref{ew}. Interesting reader can also check that, the $\mathcal{F}$-metric $D:X \times X\rightarrow [0,\infty)$ satisfies condition (iii-A) of Theorem \ref{ew}, by proceeding similarly as the proof of ``uniformly regular'' condition in Theorem \ref{FMS} under approach I. Let $a\in X$ and $\{a_n\}_{n\in \mathbb{N}}, \{b_n\}_{n\in \mathbb{N}}$ are two sequences in $X$ such that $D(a_n,a)\rightarrow 0$ and  $D(a_n,b_n)\rightarrow 0$ as $n \rightarrow \infty.$ Let $\varepsilon>0.$ By $\mathcal{F}_2$ condition, for $(f(\varepsilon)-\alpha)\in \mathbb{R}$ there exists $\delta>0$ such that $0<t<\delta \Longrightarrow f(t)<f(\varepsilon)-\alpha.$ For $\frac{\delta}{2}>0,$ there exists $k_1,k_2 \in \mathbb{N}$ such that 
$$D(a_n,a)< \frac{\delta}{2}~\forall~n\geq k_1 \mbox{ and } D(a_n,b_n)< \frac{\delta}{2}~\forall~n\geq k_2.$$
Now if $n\geq \mbox{max}\{k_1,k_2\}$ and $a \neq b_n$, then by the definition of an $\mathcal{F}$-metric space, we have
$$f(D(a,b_n))\leq f(D(a,a_n)+D(a_n,b_n))+\alpha$$
$$\Longrightarrow f(D(a,b_n))<f(\varepsilon)\Longrightarrow D(a,b_n)<\varepsilon.$$
This shows that $D(b_n,a)\rightarrow 0$ as $n \rightarrow \infty.$ 

Thus, by the metrization criterion due to Niemytski and Wilson, we can conclude that the $\mathcal{F}$-metric space $X$ is metrizable.
\end{proof}

\begin{remark}
Let us show now that any $\mathcal{F}$-metric $D$ \cite{JS1} satisfies condition (iii-C) of Theorem \ref{ew}. Let $a \in X$ and $k>0.$ Also, assume that $b\in X$ such that $D(a,b)\geq k.$ We have to find $r>0$ corresponding to $a \in X$ and $k>0$ such that for any $c\in X,$ the condition $$D(a,c)+D(b,c)\geq r$$ is satisfied. By $\mathcal{F}_2$ condition, for $(f(k)-\alpha)\in \mathbb{R}$ there exists $r>0$ such that $0<t<r \Longrightarrow f(t)<f(k)-\alpha.$ Let $c\in X.$ Since $D(a,b)\geq k>0,$ so $a \neq b.$ Then by the definition of an $\mathcal{F}$-metric space we have,

$$f(D(a,b))\leq f\Big(D(a,c)+D(c,b)\Big)+\alpha$$
$$\Longrightarrow f\Big(D(a,c)+D(c,b)\Big)+\alpha\geq f(k) ~[\mbox{since}~D(a,b)\geq k ~\mbox{and}~f\in \mathcal{F}]$$
$$\Longrightarrow f\Big(D(a,c)+D(c,b)\Big)\geq f(k)-\alpha$$

$$\Longrightarrow D(a,c)+D(c,b)\geq r ~[\mbox{since}~0<t<r \Longrightarrow f(t)<f(k)-\alpha].$$
\end{remark}

\begin{remark}
From Theorem \ref{FMS} we can conclude that if $(X,D)$ be an $\mathcal{F}$-metric space then there exists a metric $d:X \times X\rightarrow [0,\infty)$ on $X$ such that $X$ is metrizable with respect to the metric $d.$ So, the topological properties of $\mathcal{F}$-metric spaces discussed in Proposition \ref{p1}-\ref{p3} are equivalent to those of the standard metric counterparts.
\end{remark}

\section{ \bf Metrizability of $\theta$-metric spaces}

In 2013, Khojasteh et all. \cite{ks}  introduced the notion of a $\theta$-metric space by using the concept of an $B$-action on 
the set $[0,\infty)\times [0,\infty).$ Before proceeding to the definition of $\theta$-metric space, we recall the definition of an $B$-action 
(see \cite{ks}), as follows.

\begin{definition}\cite[\, Definition 4.]{ks}
Let $\theta:[0,\infty)\times [0,\infty)\rightarrow [0,\infty)$ be a continuous mapping with respect to each variable. 
Let $Im(\theta)=\{\theta(s,t): s,t\geq 0\}.$ Then $\theta$ is called an $B$-action if and only if the following conditions are satisfied :
\begin{enumerate}
\item[(i)] $\theta(0,0)=0$ and $\theta(s,t)=\theta(t,s) ~\mbox{for all}~ s,t \geq 0$;

\item[(ii)] $\theta(x,y)<\theta(s,t)$ if either $x\leq s,~ y<t$ or $x< s,~ y\leq t$;

\item[(iii)] For each $m\in Im(\theta)$ and for each $t\in [0,m],$ there exists $s\in [0,m]$ such that $\theta(s,t)=m;$

\item[(iv)] $\theta(s,0)\leq s ~\mbox{for all}~ s>0$.

\end{enumerate}

\end{definition}
Authors denoted the collection of all such $B$-actions by $Y$. Now, we will  recall (see \cite{ks}) the definition of a $\theta$-metric space, as follows.

\begin{definition}\cite[\, Definition 11.]{ks}
Let $X$ be a non-empty set. A distance function $d: X \times X\rightarrow [0,\infty)$ is said to be a $\theta$-metric on $X$ 
with respect to an $B$-action $\theta\in Y$  if the following conditions are satisfied:
\begin{enumerate}
\item[(i)] $d(x,y)=0\Longleftrightarrow x=y~\mbox{for all}~(x,y)\in X \times X$;

\item[(ii)] $d(x,y)=d(y,x)~$ $\mbox{for all}~ (x,y)\in X \times X$;

\item[(iii)]$d(x,z)\leq \theta(d(x,y),d(y,z))~$ $\mbox{for all}~ x,y,z\in X$.

\end{enumerate}
\end{definition}
The triple $(X,d,\theta)$ is called a $\theta$-metric space. If we take $\theta(s,t)=s+t, ~s,t\geq 0$ then $\theta$-metric space reduce to metric space. In the same paper, Khojasteh et all. \cite{ks} also developed some topological structure induced by the $\theta$-metric and concluded that it is a metrizable topological space. However their proof of metrizability relies on the prior knowledge of the uniformity of an uniform space $X.$ In our paper,  we prove the metrizability of $\theta$-metric spaces by using the well-known Niemytski and Wilson's metrization theorem.

\begin{theorem} \label{theta}
Let $(X,d,\theta)$ be a $\theta$-metric space where $\theta$ is an $B$-action on $[0,\infty)\times [0,\infty).$ Then $X$ is metrizable.
\end{theorem}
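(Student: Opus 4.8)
The plan is to invoke the Niemytski--Wilson metrization theorem (Theorem \ref{ew}), exactly as in the two previous sections. Conditions (i) and (ii) of Theorem \ref{ew} are precisely axioms (i) and (ii) in the definition of a $\theta$-metric, so they hold for $d$ with no work. Everything therefore reduces to verifying one of the three equivalent ``locally regular'' conditions, and I would aim at (iii-C), since the $\theta$-triangle inequality $d(x,z)\leq\theta(d(x,y),d(y,z))$ fits it most naturally.

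The decisive ingredient I would isolate first is the following elementary fact about the $B$-action: for every $\varepsilon>0$ there is a $\phi>0$ such that $s<\phi$ and $t<\phi$ force $\theta(s,t)<\varepsilon$; that is, $\theta(s,t)\to 0$ as $s,t\to 0^{+}$. Granting this, (iii-C) follows at once. Fix $a\in X$ and $k>0$, and choose $\phi>0$ so that $s,t<\phi\Rightarrow\theta(s,t)<k$. If $b\in X$ satisfies $d(a,b)\geq k$ and $c\in X$ is arbitrary, then $d(a,b)\leq\theta(d(a,c),d(c,b))$ rules out the possibility that both $d(a,c)<\phi$ and $d(c,b)<\phi$, for otherwise $d(a,b)<k$, a contradiction. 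Hence $\max\{d(a,c),d(c,b)\}\geq\phi$, so $d(a,c)+d(c,b)\geq\phi$, and $r=\phi$ works. (The very same fact verifies (iii-A), with $\phi(a,\varepsilon)$ independent of $a$.)

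The real content, and the step I expect to be the only genuine obstacle, is the auxiliary fact above, because a $B$-action is assumed continuous only separately in each variable, not jointly; so I cannot merely pass to the limit along the diagonal $s=t$. My plan is to route the estimate through the coordinate axes, where separate continuity is available, and then invoke monotonicity to control the interior. Concretely, fix $\varepsilon>0$. Since $\theta(0,0)=0$ and $t\mapsto\theta(0,t)$ is continuous at $0$, there is an $a>0$ with $\theta(0,a)<\varepsilon$; since $s\mapsto\theta(s,a)$ is continuous at $0$ with value $\theta(0,a)<\varepsilon$, there is a $\phi$ with $0<\phi\leq a$ and $\theta(\phi,a)<\varepsilon$. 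Then for any $s,t<\phi$ we have $s\leq\phi$ and $t\leq\phi\leq a$, so the monotonicity axiom of the $B$-action yields $\theta(s,t)\leq\theta(\phi,a)<\varepsilon$, which is what was needed.

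Finally, having verified one locally regular condition alongside (i) and (ii), Theorem \ref{ew} delivers the metrizability of $(X,d,\theta)$ immediately. I would also note that, since the $\phi$ produced above does not depend on the base point, the $\theta$-metric is in fact uniformly regular, so Chittenden's theorem (Theorem \ref{ch}) could equally well be used in place of Theorem \ref{ew}.
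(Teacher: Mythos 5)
Your proposal is correct and follows the same overall route as the paper: verify conditions (i) and (ii) of Theorem \ref{ew} directly from the axioms and then establish one of the locally regular conditions, with (iii-C) as the target. The difference lies in how the key analytic fact about $\theta$ near the origin is obtained, and here your version is actually the more careful one. The paper asserts that $\theta$ is (jointly) continuous at $(0,0)$ by arguing that $(s_n,t_n)\to(0,0)$ implies $\theta(s_n,t_n)\to\theta(0,0)$ ``as $\theta$ is continuous in both of the variables'' --- but a $B$-action is only assumed continuous in each variable separately, and separate continuity does not in general yield joint continuity, so as written this step is a gap. Your argument repairs it: you use separate continuity only along the coordinate axes (to get $\theta(0,a)<\varepsilon$ and then $\theta(\phi,a)<\varepsilon$) and then invoke the monotonicity axiom of the $B$-action to dominate $\theta(s,t)$ by $\theta(\phi,a)$ on the whole box $s,t<\phi$; this is a legitimate deduction of the needed ``box continuity'' at the origin from the stated hypotheses. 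Downstream the two proofs diverge only cosmetically: the paper phrases the neighbourhood of $(0,0)$ as a Euclidean ball and extracts $r=\delta/\sqrt{2}$, while you work with the box directly and get $r=\phi$; the paper also verifies (iii-B) en route, which is redundant since the three conditions are equivalent. Your closing observation that $\phi$ is independent of the base point, so that Chittenden's Theorem \ref{ch} applies as well, is a correct and worthwhile addition.
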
 

\begin{proof}
Throughout this proof, we will use the standard norm on the set $[0,\infty)\times [0,\infty)$ as $\|(x,y)\|=\sqrt{x^2+y^2}, x,y\geq 0.$ First of all, we show that the $B$-action $\theta$ is continuous at the point $(0,0).$ Suppose that $\{(s_n,t_n)\}_{n\in \mathbb{N}}$ is a sequence in $[0,\infty)\times [0,\infty)$, 
such that $(s_n,t_n)\rightarrow (0,0)$ as $n\rightarrow \infty.$ This implies $s_{n}\rightarrow 0$ and $t_{n}\rightarrow 0$ as $n\rightarrow \infty$ in the standard norm in $[0,\infty)\times [0,\infty).$ Now, as the $B$-action $\theta$ is continuous in both of the variables, we get that $\theta(s_n,t_n)\rightarrow \theta(0,0)=0$ as $n\rightarrow \infty.$ This shows that the $B$-action $\theta$ is continuous at the point $(0,0).$ Now we prove that $X$ is metrizable. 
By the definition of a $\theta$-metric space, the distance function $d:X \times X\rightarrow [0,\infty)$ on $X$ satisfies the first two conditions of 
Niemytski and Wilson's metrization result, i.e,
\begin{enumerate}
\item[(i)] $d(x,y)=0\Longleftrightarrow x=y~~\mbox{for all}~(x,y)\in X \times X$;

\item[(ii)] $d(x,y)=d(y,x)~~\mbox{for all}~(x,y)\in X \times X$.
\end{enumerate}
Now we show that any $\theta$-metric $d:X \times X\rightarrow [0,\infty)$ satisfies the condition (iii-B) and (iii-C) of Theorem \ref{ew}. Interesting reader can also check that, the $\theta$-metric $d:X \times X\rightarrow [0,\infty)$ also satisfies the condition (iii-A) of Theorem \ref{ew}. Let $a\in X$ and $\{a_n\}_{n\in \mathbb{N}}, \{b_n\}_{n\in \mathbb{N}}$ are two sequences in $X$ such that $d(a_n,a)\rightarrow 0$ and $d(a_n,b_n)\rightarrow 0$ as $n\rightarrow \infty.$ We show that $d(b_n,a)\rightarrow 0$ as $n\rightarrow \infty.$ Now $(d(a_n,a),d(a_n,b_n))\rightarrow (0,0)$ as $n\rightarrow \infty$ in the standard norm on $[0,\infty)\times [0,\infty).$ As the $B$-action $\theta$ is continuous at the point $(0,0)$ so $\theta(d(a_n,a),d(a_n,b_n))\rightarrow \theta(0,0)=0$ as $n\rightarrow \infty.$ Now from the definition of $\theta$-metric space we have,
$$d(a,b_n)\leq \theta(d(a_n,a),d(a_n,b_n))$$
$$\Longrightarrow d(a,b_n)\rightarrow 0~\mbox{as}~n\rightarrow \infty.$$
So the $\theta$-metric $d:X \times X\rightarrow [0,\infty)$ satisfies the condition (iii-B) of Theorem \ref{ew}. Now we check for condition (iii-C).
Let $a\in X$ and $k>0.$ Let $b\in X$ such that $d(a,b)\geq k.$ As the $B$-action $\theta$ is continuous at the point $(0,0)$, so for $k>0$ there exists $\delta>0$ such that $$\theta(x,y)<k~\mbox{whenever}~(x,y)\in B\Big((0,0),\delta \Big)\bigcap \Big([0,\infty)\times [0,\infty) \Big).$$
Here $B\Big((0,0),\delta\Big)$ denotes the open ball centered at $(0,0)$ and radius $\delta$ in the standard norm, i.e, $B\Big((0,0),\delta\Big)=\Big\{(x,y)\in \mathbb{R}^{2}: \|(x,y)\|<\delta \Big\}.$ Let $c\in X.$ From the definition of $\theta$-metric space we have
$$d(a,b)\leq \theta(d(a,c),d(c,b))$$
$$\Longrightarrow \theta(d(a,c),d(c,b))\geq k$$
$$\Longrightarrow (d(a,c),d(c,b))\notin B\Big((0,0),\delta \Big)\bigcap \Big([0,\infty)\times [0,\infty) \Big)$$
$$\Longrightarrow d^{2}(a,c)+d^{2}(c,b)\geq \delta^{2}$$
$\mbox{as}~(d(a,c),d(c,b))\in [0,\infty)\times [0,\infty),~\mbox{so,}~(d(a,c),d(c,b))\notin  B\Big((0,0),\delta \Big).$

So either $d(a,c)\geq \frac{\delta}{\sqrt{2}}$ or $d(c,b)\geq \frac{\delta}{\sqrt{2}}.$ So we have $d(a,c)+d(c,b)\geq \frac{\delta}{\sqrt{2}}.$ This shows that the $\theta$-metric on $X$ satisfies condition (iii-C) of Theorem \ref{ew}. Thus, by the metrization criterion due to Niemytski and Wilson, we can conclude that, the $\theta$-metric space $X$ is metrizable.
\end{proof}

%

{\bf Open question.} Can an explicit metric  $d$ separately be constructed with respect to which $b$-metric spaces with coefficient $K>0$ and $\theta$-metric spaces are metrizable ?

\begin{Acknowledgement}
 The Research is funded by the Council of Scientific and Industrial Research (CSIR), Government of India under the Grant Number: $25(0285)/18/EMR-II$. 
We express our deep gratitude to Professor Pratulananda Das for his valuable suggestions during the preparation of the draft. 
\end{Acknowledgement}

\bibliographystyle{plain}

\end{document}